\newtheorem{theorem}{Theorem}[section]
\newtheorem{proposition}[theorem]{Proposition}
\newtheorem{lemma}[theorem]{Lemma}
\theoremstyle{remark}
\newtheorem*{remark}{Remark}
\def\@bibdataout@init{}\def\pre@bibdata{}\makeatother
\newcommand{\BQP}{\mathsf{BQP}}
\newcommand{\FNP}{\mathsf{FNP}}
\newcommand{\NP}{\mathsf{NP}}
\newcommand{\shP}{\mathsf{\#P}}
\newcommand{\coNP}{\mathsf{coNP}}
\newcommand{\Z}{\mathbb{Z}}
\newcommand{\Q}{\mathbb{Q}}
\newcommand{\C}{\mathbb{C}}
\newcommand{\tM}{\tilde{M}}
\newcommand{\del}{\partial}
\newcommand{\Ex}{\mathrm{Ex}}
\newcommand{\GL}{\mathrm{GL}}
\newcommand{\longto}{\longrightarrow}
\newcommand{\defeq}{\stackrel{\mathrm{def}}{=}}
\newcommand{\Thm}[1]{Theorem~\ref{#1}}
\newcommand{\Lem}[1]{Lemma~\ref{#1}}
\newcommand{\Sec}[1]{Section~\ref{#1}}
\newcommand{\Prop}[1]{Proposition~\ref{#1}}
\newcommand{\Fig}[1]{Figure~\ref{#1}}
\newcommand{\ie}{\emph{i.e.}}
\newcommand{\eg}{\emph{e.g.}}
\newenvironment{eq}[1]{\begin{equation}\label{#1}}
    {\end{equation}\ignorespacesafterend}
\begin{document}

\title{Identifying lens spaces in polynomial time}

\author{Greg Kuperberg}
\email{greg@math.ucdavis.edu}
\thanks{Partly supported by NSF grants CCF-1319245 and CCF-1716990.}
\affiliation{University of California, Davis}

\begin{abstract} We show that if a closed, oriented 3-manifold $M$ is
promised to be homeomorphic to a lens space $L(n,k)$ with $n$ and $k$
unknown, then we can compute both $n$ and $k$ in polynomial time in the
size of the triangulation of $M$.  The tricky part is the parameter $k$.
The idea of the algorithm is to calculate Reidemeister torsion using
numerical analysis over the complex numbers, rather than working directly
in a cyclotomic field.
\end{abstract}

\maketitle

\section{Introduction}
\label{s:intro}

The algorithmic problem of distinguishing or classifying closed
$d$-dimensional manifolds is elementary when $d \le 2$, provably impossible
when $d \ge 4$, and recursive when $d=3$ \cite{K:homeo}.  The remaining
question is how efficiently we can distinguish closed 3-manifolds; or
whether we can distinguish them efficiently with one or another form of
help.  One small but interesting part of this question is the case of lens
spaces.  If $M$ is a closed, oriented 3-manifold, conventionally given by a
triangulation, then is it a lens space?   If so, which one?  In this article,
we show that at least the second question has an efficient algorithm.

\begin{theorem} Suppose that $M$ is a closed, oriented 3-manifold given by
a triangulation with $t$ tetrahedra, and that we are promised
that $M \cong L(n,k)$ is a lens space with $n$ and $k$ unknown.
Then $n$ and $k$ can be computed in deterministic polynomial time in $t$.
\label{th:main} \end{theorem}

The motivation for our result is a recent result announced by Lackenby and
Schleimer \cite{LS:lens} to both recognize whether $M$ is a lens space,
and if so which one, in the complexity class $\FNP$.  In other words, they
provide a deterministic algorithm (a verifier) with the help of a prover
who asserts the answer and provides a certificate that it is correct.
Thus, \Thm{th:main} implies that in the Lackenby-Schleimer result, it is
enough for the prover to only provide a certificate that $M$ is a lens
space at all, which is simpler.  According to Lackenby and Schleimer,
the certificate can be a Heegaard torus which is almost normal relative
to the triangulation of $M$.

Recall that the standard lens space $L(n,k)$ is constructed by gluing the
top hemisphere of a ball, often imagined as a convex dihedron or ``lens",
to the bottom hemisphere with a rotation of $2\pi k/n$.  The calculation
of $n$ is reasonably standard, because if $M \cong L(n,k)$, then we can
calculate the homology $H_1(M) \cong \Z/n$ in polynomial time using a
version of the Smith normal form algorithm \cite{KB:smith}.  The second
parameter $k$ is more subtle.  We can take it to be a prime residue $k \in
(\Z/n)^\times$.  Reidemeister \cite{Reidemeister:lins} showed that
\[ L(n,k_1) \cong L(n,k_2) \]
as oriented 3-manifolds if and only if $k_1 = k_2$ or $k_1 = 1/k_2$.

In another respect, both parameters are more subtle than one might expect.
Suppose that $M \cong L(n,k)$ has $t$ tetrahedra.  In the most standard
(generalized) triangulation of $L(n,k)$, $n = t$.   But there are
other families of triangulated manifolds $M \cong L(n,k)$ such that
$n$ is exponential in $t$, and with exponentially many values of $k$
for specific values of $n$.  See \Sec{s:large}.  If we can be promised a
polynomial bound on $n$ itself rather than merely a polynomial bound on its
digits, then it is easier to calculate $k$, because we can directly follow
Reidemeister's method by computing the Reidemeister torsion $\Delta$ of $M$
(endowed with a suitable local system of coefficients) in the cyclotomic
ring $\Z[\zeta_n]$ or its fraction field $\Q(\zeta_n)$, where $\zeta_n$
is a primitive $n$th root of unity.

The idea of our proof of \Thm{th:main} is to approximately compute the
Reidemeister torsion using numerical analysis over the complex
numbers $\C$.  If we let $\zeta_n = \exp(2\pi i/n)$, the result is a sparse
polynomial expression
\[ \Delta = \zeta_n^c (1-\zeta_n^a)(1-\zeta_n^b) \in \C. \]
In order to establish a polynomial-time algorithm, we want a polynomial
upper bound on the digits of precision of an approximation to $\Delta$
that we need to resolve the exponents $a$, $b$, and $c$.  We also need an
algorithm to calculate those exponents.  More precisely, the precision bound
needs to be polynomial in $t$ and thus polynomial in $\log(n)$.  According to
MathOverflow\footnote{\url{http://mathoverflow.net/questions/46068}},
even the first part is unknown for general sparse sums of roots of unity.
A bound is known for sums with at most four terms \cite{Myerson:small}.
(Remark: The unproven behavior of sparse sums of powers of $\zeta_n$
can be circumvented by making $\zeta_n$ a randomly chosen primitive
$n$th root of unity rather than specifically $\exp(2\pi i/n)$.)
More to the point, the precision problem is easier in our case, and
we can also solve for the exponents with the aid of another answer in
MathOverflow\footnote{\url{http://mathoverflow.net/questions/215852}}.

In a previous version of this paper \cite{K:lensv1}, the author found a
weaker version of \Thm{th:main} with a quantum polynomial-time algorithm,
\ie, an algorithm in $\BQP$ \cite{NC:qcqi}.  The idea then was to replace
$\Z[\zeta_n]$ with a quotient field $\Z/p$, where $p$ is a prime which is
congruent to $1$ mod $n$.  Then the Reidemeister torsion calculation reduces
to the discrete logarithm problem, which can be solved with Shor's algorithm
\cite{Shor:factorization}.  A quantum algorithm which is faster than any
competing classical algorithm is always interesting, but in this case the
author later noticed that there is a fast classical algorithm after all.

The question remains whether there is a competitive quantum algorithm for any
natural question in 3-manifold topology.  This is a natural thing to look
for, since for instance it is known that unknottedness is the complexity
class $\NP \cap \coNP$ \cite{HLP:complexity,K:knottedness,Lackenby:norm}.
(See the Complexity Zoo \cite{W:zoo} for a survey of computational complexity
classes.)  While $\NP \cap \coNP$ is thought to neither contain nor be
contained in quantum polynomial time $\BQP$, some key problems (such as
discrete logarithm) are known to be in both of them.  Note that Aharonov,
Jones, and Landau \cite{AJL:approx} give an algorithm to approximate the
Jones polynomial of a knot at a principal root of unity; this algorithm also
has a version for 3-manifolds \cite{GMR:three}.  However, the approximation
is exponentially poor; any fair approximation that could be useful
for geometric topology is $\shP$-hard \cite{K:jones}.

\acknowledgments

The author would like to thank an anonymous MathOverflow user for help
with part of the calculation.

\section{Large lens spaces with small triangulations}
\label{s:large}

\begin{figure*}[htb]\begin{center}\begin{tikzpicture}[semithick]
\fill[white!85!blue] (0,1.6) arc (90:270:1.6) -- (8,-1.6)
    arc (-90:90:1.6) -- (0,1.6);
\draw (5,1.6) -- (0,1.6) arc (90:270:1.6) -- (5,-1.6);
\draw (7,-1.6) -- (8,-1.6) arc (-90:90:1.6) -- (7,1.6);
\draw[dotted] (5,1.6) -- (7,1.6) (5,-1.6) -- (7,-1.6);
\draw (-1,0) node {$\sigma_1$};
\draw (1,0) node {$\tau_{a_1}$};
\draw (.8,.7) edge[bend right=30,->] (1.2,1.3);
\draw (.8,-1.3) edge[bend right=30,->] (1.2,-.7);
\draw (3,0) node {$\tau_{a_2}$};
\draw (2.8,1.3) edge[bend left=30,->] (3.2,.7);
\draw (2.8,-.7) edge[bend left=30,->] (3.2,-1.3);
\draw (5,0) node {$\tau_{a_3}$};
\draw (4.8,.7) edge[bend right=30,->] (5.2,1.3);
\draw (4.8,-1.3) edge[bend right=30,->] (5.2,-.7);
\draw (6.8,.7) edge[bend right=30,->] (7.2,1.3);
\draw (6.8,-1.3) edge[bend right=30,->] (7.2,-.7);
\draw (7,0) node {$\tau_{a_m}$};
\draw (6,0) node {\scalebox{1.5}{$\cdots$}};
\draw (9,0) node {$\sigma_2$};
\begin{scope}[shift={(0,0)},xscale=.5]
\draw (0,0) ellipse (.9 and 1.6);
\draw (-.08,-.88) -- (0,-.72) .. controls (.16,-.4) and (.18,-.16) .. (.18,0)
    .. controls (.18,.16) and (.16,.4) .. (0,.72) -- (-.08,.88);
\draw (0,-.72) .. controls (-.16,-.4) and (-.2,-.16) .. (-.2,0)
    .. controls (-.2,.16) and (-.16,.4) .. (0,.72);
\end{scope}
\begin{scope}[shift={(2,0)},xscale=.5]
\draw (0,0) ellipse (.9 and 1.6);
\draw (-.08,-.88) -- (0,-.72) .. controls (.16,-.4) and (.18,-.16) .. (.18,0)
    .. controls (.18,.16) and (.16,.4) .. (0,.72) -- (-.08,.88);
\draw (0,-.72) .. controls (-.16,-.4) and (-.2,-.16) .. (-.2,0)
    .. controls (-.2,.16) and (-.16,.4) .. (0,.72);
\end{scope}
\begin{scope}[shift={(4,0)},xscale=.5]
\draw (0,0) ellipse (.9 and 1.6);
\draw (-.08,-.88) -- (0,-.72) .. controls (.16,-.4) and (.18,-.16) .. (.18,0)
    .. controls (.18,.16) and (.16,.4) .. (0,.72) -- (-.08,.88);
\draw (0,-.72) .. controls (-.16,-.4) and (-.2,-.16) .. (-.2,0)
    .. controls (-.2,.16) and (-.16,.4) .. (0,.72);
\end{scope}
\begin{scope}[shift={(8,0)},xscale=.5]
\draw (0,0) ellipse (.9 and 1.6);
\draw (-.08,-.88) -- (0,-.72) .. controls (.16,-.4) and (.18,-.16) .. (.18,0)
    .. controls (.18,.16) and (.16,.4) .. (0,.72) -- (-.08,.88);
\draw (0,-.72) .. controls (-.16,-.4) and (-.2,-.16) .. (-.2,0)
    .. controls (-.2,.16) and (-.16,.4) .. (0,.72);
\end{scope}
\end{tikzpicture}\end{center}
\caption{A lens space $L(n,k)$ as solid tori with triangulations
    $\sigma_1$ and $\sigma_2$, connected by twisted bundles $(S^1 \times S^1)
    \rtimes I$ with triangulations $\tau_{a_j}$.}
\label{f:hard} \end{figure*}
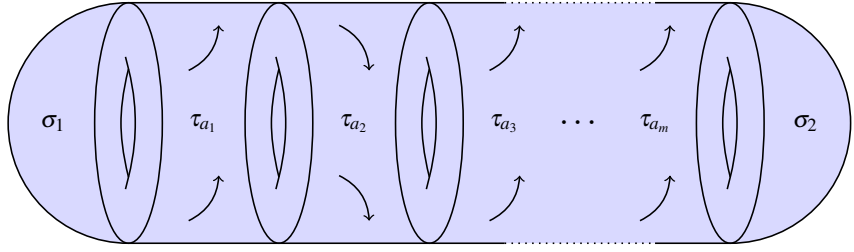

In this section, we will construct lens spaces $M \cong L(n,k)$ where
$n$ is much larger than the number of tetrahedra $t$, and $k$ has many
possible values.  The manifolds that we construct are easy to identify
given their specific triangulations.  However, the triangulations can
then be obfuscated with local moves (\eg, Newman-Pachner bistellar moves).
\Prop{p:large} makes both \Thm{th:main} and the Lackenby-Schleimer result
look more interesting.   For the latter question, it is easy to compute
whether $H_1(M) \cong \Z/n$ is cyclic.  If it is, and if $n$ is polynomially
bounded in $t$, then Schleimer's prior result \cite{Schleimer:sphere}
gives an algorithm in $\NP$ to compute whether the abelian cover $\tM$
is homeomorphic to $S^3$, which then implies that $M$ is a lens space.

\begin{proposition} There exists a family of triangulated lens spaces $\{M
\cong L(n,k)\}$ with $t = t(n,k)$ tetrahedra, such that $n$ is exponential
in $t$ and there are exponentially many choices for $k$ for each fixed $n$.
\label{p:large} \end{proposition}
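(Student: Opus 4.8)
The plan is to realize the family through the genus‑$1$ Heegaard picture of \Fig{f:hard}: a lens space $L(n,k)$ is obtained by gluing two solid tori $\sigma_1,\sigma_2$ along their boundary tori by a mapping class $\phi\in\GL_2(\Z)$, and $n$ and $k$ are read off from the matrix of $\phi$ as two continuants of the continued fraction expansion $n/k=[a_1,a_2,\ldots,a_\ell]$. First I would fix an efficient triangulation of the solid torus with a standard boundary triangulation, together with an efficient triangulated model $\tau_a$ of $(S^1\times S^1)\times I$ whose effect on boundary slopes is the power $T^a$ of a Dehn twist, built by layering $|a|$ tetrahedra (one ``diagonal exchange'' per tetrahedron). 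Stacking $\tau_{a_1},\ldots,\tau_{a_\ell}$ between $\sigma_1$ and $\sigma_2$, as in \Fig{f:hard}, then produces a triangulation of $L(n,k)$ with $t=O(a_1+\cdots+a_\ell)$ tetrahedra, in which $n$ and $k$ are the two continuants built from $(a_1,\ldots,a_\ell)$; this is the standard layered construction, and $\gcd(n,k)=1$ comes for free since $n/k$ is in lowest terms.

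Next I would restrict to expansions with every partial quotient in $\{1,2\}$. For such a sequence of length $\ell$ the continuant $n=K(a_1,\ldots,a_\ell)$ satisfies $F_{\ell+1}\le n\le P_\ell$ (Fibonacci and Pell numbers), so $\ell=\Theta(\log n)$ and hence $t=O(\ell)=O(\log n)$; this already gives the first assertion, that $n$ is exponential in $t$. What remains is to exhibit, for infinitely many \emph{fixed} values of $n$, exponentially many admissible $k$, i.e.\ exponentially many $\{1,2\}$‑expansions with the same continuant $n$ but with pairwise distinct induced values of $k$ up to the Reidemeister identification $k\sim\pm k^{\pm1}$; by that identification, discarding at most four expansions per class still leaves that many pairwise non‑homeomorphic lens spaces, each carrying a triangulation with $t=O(\log n)$ tetrahedra, and since $\log n=\Theta(t)$ such a count is $\exp(\Omega(t))$.

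This last point is the crux, and I expect it to be the main obstacle, because naive pigeonhole fails: there are always slightly fewer $\{1,2\}$‑sequences of a given length than integers in the range of their continuants, so one must exploit the non‑uniform clustering of continuants. The tool is the classical Hensley estimate that the number of continued fractions $[a_1,\ldots,a_\ell]$, over all lengths, with all $a_i\in\{1,2\}$ and continuant at most $X$ is $\asymp X^{2\delta}$, where $\delta>1/2$ is the Hausdorff dimension of the associated Cantor set of real numbers ($\delta\approx0.531$ for digits $\{1,2\}$; larger digit bounds push $\delta$, and hence the exponent below, toward $1$). Only this lower bound and the trivial fact that at most $X$ distinct continuants occur are needed — no density theorem. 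Since $2\delta>1$, a dyadic pigeonhole over the $O(\log X)$ scales, then over the at most $X$ continuant values in the dominant scale, produces some $n\asymp X$ realized by $\gtrsim X^{2\delta-1}=n^{\Omega(1)}$ such expansions. Letting $X\to\infty$ yields infinitely many such $n$, each with $n^{\Omega(1)}=\exp(\Omega(t))$ values of $k$, which is the required family.
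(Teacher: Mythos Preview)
Your construction is the same as the paper's --- the layered/continued-fraction realization of $L(n,k)$ from \Fig{f:hard} --- and your argument is correct as written. The divergence is entirely in the counting step, and it is self-inflicted: you restrict the partial quotients to $\{1,2\}$, observe (correctly) that for this digit set there are fewer length-$\ell$ sequences than integers in the continuant range, and then call on Hensley's Hausdorff-dimension estimate $\#\{k/n : a_i\in\{1,2\},\ n\le X\}\asymp X^{2\delta}$ with $\delta>1/2$ to rescue the pigeonhole. The paper avoids this by simply taking the digit set $\{1,\dots,5\}$ with a fixed triangulation $\tau_a$ for each $a$ (so still $t=O(m)$). Then $n_j<(a_j+1)n_{j-1}$, and for $a_j$ uniform on $\{1,\dots,5\}$ one has $\Ex[\log(a_j+1)]=\tfrac15\log 720<\log 3.73$; by the law of large numbers most of the $4\cdot 5^{m-1}$ sequences give $n<3.73^m$, and since $5>3.73$ ordinary pigeonhole yields some $n$ hit by exponentially many $k$. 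In short: your route is valid but imports a nontrivial theorem from metric Diophantine approximation where an elementary averaging argument with a slightly larger digit alphabet suffices. (Incidentally, $\{1,2,3\}$ already works, since $\Ex[\log(a+1)]=\tfrac13\log 24<\log 3$; your choice $\{1,2\}$ is precisely the one that forces the Hensley detour, as $\tfrac12\log 6>\log 2$.)
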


\begin{proof} Our construction is equivalent to a well-known
construction of lens spaces using Dehn surgery on a chain of unknots
\cite[Ex. 9H13]{Rolfsen:knots}.

We choose a fixed triangulation $\sigma$ of the torus $T = S^1 \times
S^1$, and we choose two solid tori $X_1, X_2$ with $\del X_1, \del X_2
= T$, and with triangulations $\sigma_1,\sigma_2$ that extend $\sigma$.
We can describe an element of the mapping class group of $T$ by an element
of $\GL(2,\Z)$ that describe its action on the homology group $H_1(T)$.
For each $1 \le a \le 5$, we choose a fixed triangulation $\tau_a$
of a torus bundle over an interval, $T \rtimes I$, that connects the
triangulation $\sigma$ of $T$ to itself using the monodromy matrix
\[ F_a = \begin{pmatrix} 0 & 1 \\ 1 & a \end{pmatrix}. \]
Our construction is to concatenate a sequence $\{\tau_{a_j}\}_{1 \le j
\le m}$ of these mapping cylinders together with a solid torus at each
end, as in \Fig{f:hard}.  We also assume that $a_1 > 1$.  The tetrahedron
number $t$ is thus $O(m)$.  If the solid tori $\sigma_1$ and $\sigma_2$
are positioned suitably, then the result is $M \cong L(n,k)$, where $n$
and $k$ are given as a finite continued fraction:
\[ \frac{n}{k} = a_m + \frac1{a_{m-1} +
    \frac{1}{\ddots_{+ \frac1{a_1}}}}. \]
If we let $n_j/k_j$ be the $j$th partial evaluation, then
we can also express the calculation with the recurrence
\[ k_j = n_{j-1} \qquad n_j = a_j n_{j-1} + k_j = a_j n_{j-1} + n_{j-2}. \]
The answer $n/k$ determines the monodromy numbers $\{a_j\}$ since the
continued fraction is unique under the constraint $a_1 > 1$.  Since the
integers $\{n_j\}$ increase, we obtain the inequality
\[ n_j < (a_j+1)n_{j-1}.\]
If we choose the sequence of monodromy numbers at random, we obtain the
probabilistic relation
\[ \Ex[\log(n_j)] < \Ex[\log(a_j+1)] + \Ex[\log(n_j)]. \]
Also,
\begin{align*}
\Ex[\log(a_j+1)] &= \frac{\log(2) + \log(3) + \dots + \log(6)}5 \\
    & < \log(3.73).
\end{align*}
By the law of large numbers, most monodromy sequences produce $n < 3.73^m$.
On the other hand, there are $4 \cdot 5^{m-1}$ sequences of length $m$, so
by the pigeonhole principle, some value of $n$ must see exponentially many
values of $k$.  Any such value of $n$ must also be exponentially large.
In any case, for every choice of numbers $\{a_j\}$, $\{n_j\}$ grows at
least as fast as the Fibonacci numbers, which also implies that $n$ is
exponentially large.
\end{proof}

\section{Reidemeister torsion}
\label{s:torsion}

We review Reidemeister torsion \cite{Turaev:torsions} and its
value for lens spaces.

Suppose that
\[ C_* = \{C_k \stackrel{\del}\longto C_{k-1}\}_{0 \le k \le m} \]
is a finite, acyclic chain complex over a field $F$.  (Reidemeister torsion
is well defined for a free complex over any commutative ring, but it is
easier to discuss algorithms in the field case.)  Suppose in addition
that each term $C_k$ has a distinguished basis.  Since $C_*$ is acyclic
and finite, it is isomorphic to a direct sum of complexes of the form
\[ 0 \longto F \stackrel{\cong}{\longto} F \longto 0. \]
Define an \emph{adapted basis} $A_*$ for $C_*$ to be
one induced by such a decomposition.   In other words, if $\alpha_k \in
A_k$ is a basis vector, then either $\del \alpha_k = 0$, or $\del \alpha_k
\in A_{k-1}$ is another basis vector. Then the \emph{Reidemeister torsion}
of $C_*$ is
\[ \Delta(C_*) \defeq (\det A_0) (\det A_1)^{-1} (\det A_2)
    \cdots (\det A_m)^{(-1)^m}, \]
where each $A_j$ is also interpreted as the change-of-basis matrix from
the distinguished basis to the adapted basis.  The following two facts
are standard:
\begin{description}
\item[1.] Every adapted basis yields the same value of $\Delta(C_*)$.
\item[2.] Let $C_*$ be the chain complex of a finite CW complex $\sigma$
with PL attaching maps, possibly with twisted coefficients, and using
the cells of $\Phi$ as its distinguished basis.  Then the Reidemeister
torsion $\Delta(C_*)$ is invariant under refinement of $\Phi$.
\end{description}
The second fact essentially says that Reidemeister torsion is a PL
topological invariant.  We have to be careful because the sign of
$\Delta(C_*)$ depends on the ordering and orientation of the cells of
$\Phi$, and ambiguities in the local coefficient system can also make
$\Delta(C_*)$ multivalued.

Let $M$ be a closed, oriented rational homology 3-sphere with a
triangulation, or more generally a cellulation which may support a
combinatorial local system.  We first calculate its untwisted Reidemeister
torsion with coefficients in $F = \Q$.  Using the orientation, we can
canonically augment the chain complex $C_*(M;\Q)$ at both ends to obtain
the acyclic complex
\begin{eq}{e:augment} Q_* = \left\{ \begin{aligned} &0 \longto \Q
    \longto C_3(M;\Q) \longto C_2(M;\Q) \\
    &\quad \longto C_1(M;\Q) \longto C_0(M;\Q) \longto \Q \longto 0
    \end{aligned} \right\}. \end{eq}
Then it is standard that
\[ \Delta(Q_*) = \pm |H_1(M;\Z)|. \]
The sign is not a topological invariant, because the $j$-simplices of $M$
are unordered and unoriented, so they only provide $C_j(M;\Q)$ with an
unordered, unsigned basis.  We choose an ordering and an orientation of
the cells such that $\Delta(Q_*) > 0$.  We can then use the same ordering
and orientation for a let Reidemeister torsion calculation on $M$ with
twisted coefficients.

Suppose further that 
\[ H_1(M) = H_1(M;\Z) \cong \Z/n. \]
Then to compute the Reidemeister torsion of $M$, we let $F = \Q(\zeta_n)$,
where $\zeta_n$ is an abstract primitive $n$th root of unity, \ie,
an abstract root of the $n$th cyclotomic polynomial.  We also choose a
cellular cocycle $\omega \in C^1(M;\Z/n)$ such that $[\omega]$ generates
$H^1(M;\Z/n)$.  We use $\omega$ to define a twisted coefficient system
$\Q(\zeta)_\omega$ on $M$, and we let
\[ R_* \defeq C_*(M;\Q(\zeta_n)_\omega)\]
to define the Reidemeister torsion $\Delta(R_*)$ of $M$.  A change in the
choice of the generator $[\omega]$ can change $\Delta(R_*)$ by a Galois
automorphisms of $\Q(\zeta_n)$.  After fixing $[\omega]$, a change in the
choice of its representative $\omega$ can change $\Delta(R_*)$ by a factor
of $\zeta_n^c$ for some residue $c \in \Z/n$.  Otherwise $\Delta(R_*)$
is a topological invariant of $M$, provided that the cells of $M$ are
ordered and oriented so that $\Delta(Q_*) > 0$.

In particular, if $M = L(n,k)$, then
\begin{eq}{e:torsion}
\Delta(R_*) = \zeta_n^c (1-\zeta_n^a)(1-\zeta_n^b),
\end{eq}
where 
\[ \frac{a}{b} = k^{\pm 1} \in \Z/n. \]
This answer is easy to calculate using the standard cellulation of $L(n,k)$
with one cell in each dimension, as follows.  For a convenient choice of
twisted coefficients, this CW complex yields
\[ 0 \longto \Q(\zeta_n) \stackrel{1-\zeta_n}\longto \Q(\zeta_n)
    \stackrel{0}\longto \Q(\zeta_n) \stackrel{1-\zeta_n^k}\longto
    \Q(\zeta_n) \longto 0. \]
Thus,
\[ \Delta(R_*) = (1-\zeta_n)(1-\zeta_n^k). \]
The formula \eqref{e:torsion} is the same as this one, except generalized
to let $\Delta(R_*)$ change with a change in the choice of $\omega$.
The exponents $a$ and $b$ are also ambiguous, as follows.  First, the value
of the torsion \eqref{e:torsion} does not determine the global sign of $a$
and $b$, only their relative sign, since
\[ \zeta_n^c (1-\zeta_n^a)(1-\zeta_n^b) = \zeta_n^{a+b+c}
    (1-\zeta_n^{-a})(1-\zeta_n^{-b}). \]
The formula is also symmetric in $a$ and $b$, so we cannot distinguish $k$
from $1/k$. This stands to reason because
\[ L(n,k) \cong L(n,1/k). \]

\section{Proof of \Thm{th:main}}

To prove \Thm{th:main}, we begin with two basic results in numerical
algorithms.

\begin{theorem}[Edmonds \cite{Edmonds:distinct}] The determinant $\det M$
of a square matrix $M$ defined over $\Q(i)$, the field of complex numbers
with rational real and imaginary parts, can be computed in deterministic
polynomial time in the bit complexity of $M$.
\label{th:det} \end{theorem}

Edmonds states his result over an integral domain with suitable arithmetic
algorithms; the context of the paper suggests integer matrices.  However,
his construction works just as well using exact arithmetic in the field
$\Q(i)$.  He defines a variation of Gaussian elimination such that every
number that ever appears is a minor of the original matrix $M$.  As a result,
all numbers that arise in the calculation have polynomial bit complexity.

\begin{remark} There are many ways to prove \Thm{th:det} and we do not
know the best attribution.  The hard part of the result is to bound the
bit complexity of intermediate expressions, rather than just the number
of arithmetic operations.
\end{remark}

\Thm{th:det} is related to the problem of calculating the Smith normal
form of a matrix.

\begin{theorem}[Kannan-Bachem \cite{KB:smith}] The Smith normal form of
a square or rectangular matrix $M$ defined over $\Z$, together with left
and right multipliers, can be computed in deterministic polynomial time
in the bit complexity of the $M$.
\label{th:smith} \end{theorem}

Let $M$ be an oriented rational homology 3-sphere described by a
triangulation $\Theta$.  As a first step which will be important later,
we can simplify $\Theta$ to a cellulation $\Phi$ with one vertex and by
removing enough triangles until all of the tetrahedra merge into a single
3-cell, and dually by collapsing edges that connect two distinct vertices
until only one vertex is left.  Using either $\Theta$ or $\Phi$,
we can calculate the cellular chain complex $C_*(M;\Z)$ in polynomial
time.  We can use \Thm{th:det} to calculate the torsion $\Delta(Q_*)$
of the augmentation $Q_*$ in equation \eqref{e:augment}; in particular to
determine whether $C_*(M;\Z)$ has a positive or negative basis.  We can
assume a positive basis.

We can iteratively use \Thm{th:smith} to calculate a change of basis of
the chain complex $C_*(M;\Z)$ to put every differential $\del_k$ into
Smith normal form.  This also puts the dual complex $C^*(M;\Z)$ into Smith
normal form.  Using Smith normal form, if $H_1(M;\Z) \cong \Z/n$, then we can
calculate a cocycle $\omega \in C^1(M;\Z/n)$ that generates $H^1(M;\Z/n)$,
and we can express $\omega$ in the original basis of $C^*(M;\Z/n)$.

After calculating $\omega$, we can form the chain complex $R_*$ described
in \Sec{s:torsion}.  However, we will want to generalize the calculation,
and instead of computing torsion over the abstract field $\Q(\zeta_n)$ which
may have exponential dimension over $\Q$, we will compute it over the complex
numbers $\C$.  To this end, let $\zeta_n = \exp(2\pi i/n)$, and let $\zeta =
\zeta_n^\ell$ for certain exponents $\ell \in (\Z/n)^*$.  Note that $\ell$
need not be a prime residue, only non-zero, so $\zeta$ may have some lower
order $m|n$ with $m > 1$. Then we can form the chain complex $R_*(\zeta)$,
and its torsion has the same form as in equation \eqref{e:torsion}:
\begin{eq}{e:tor2}
\Delta(R_*(\zeta)) = \zeta^c(1-\zeta^a)(1-\zeta^b).
\end{eq}
Note that the constants $a$, $b$, and $c$ depend only on $\omega$ and not
on the exponent $\ell$.

If the cell complex $\Phi$ has $g$ edges, then it also has $g$ 2-cells,
and we can write the complex $R_*(\zeta)$ as
\[ 0 \longto \C \stackrel{\del_3}\longto \C^g
    \stackrel{\del_2}\longto \C^g \stackrel{\del_1}\longto
    \C \longto 0. \]
The complex $R_*(\zeta)$ is acyclic, so $\del_3$ is injective while $\del_1$
is surjective.   We can now make an adapted basis as follows:
\begin{enumerate}
\item We use the canonical basis vector $1 \in \C$ in degree 3 of
the chain complex $R_*(\zeta)$, and its image under $\del_3$ in degree 2.
\item We choose a non-zero entry of the vector $\del_3$.  If we choose
the $j$th entry $(\del_3)_j$, then we can omit the $j$th canonical basis
vector of $\C^g$ in degree 2.   We also use the image of these $g-1$
vectors under $\del_2$ in degree 1.
\item We choose a non-zero entry of the dual vector $\del_1$.  If we choose
the $k$th entry, then we include the $k$th basis vector of $\C^g$
in degree 1 and its image under $\del_1$, which is simply the scalar
value $(\del_1)_k$.
\end{enumerate}
Let $\del_2^{(j,k)}$ denote the matrix of $\del_2$ omitting the $j$th
column and the $k$th row.  Then we can express the Reidemeister torsion
of $R_*(\zeta)$ as
\[ \Delta(R_*(\zeta)) = \frac{(\del_3)_j(\del_1)_k}{\det \del_2^{(j,k)}}. \]

To compute $\Delta(R_*(\zeta))$ over $\C$, the most important question
is how many digits of precision we need throughout the calculation for an
accurate final answer.

\begin{lemma} Suppose $\zeta = \exp(2\pi i \ell/n) \in \C$ and that $R_*(\zeta)$
is the chain complex of $M$ with its local system $\C_\omega$.  Suppose that
we want to calculate $z \in \Q(i)$ such that
\[ \Delta(R_*(\zeta)) = z + O(n^{-\alpha}) \]
for some constant $\alpha$.  Then it suffices to calculate $\det
\del_2^{(j,k)}$ by estimating its entries with $d$ digits of precision,
where $d$ is polynomial in $\log(n)$, $g$, and $\alpha$.  Moreover, the
determinant can be calculated in polynomial time.
\label{l:precise} \end{lemma}

\begin{proof} Both $(\del_3)_j$ and $(\del_1)_k$ are of the form $\zeta^a
- \zeta^b$ for some constants $a$ and $b$, so each of these factors of
order $\Omega(\frac1n)$.  Thus we need to estimate $\det \del_2^{(j,k)}$
to a precision of $O(n^{-\alpha-2})$.  Each entry $\del_2^{(j,k)}$ is
$O(g)$, and therefore each $(g-1) \times (g-1)$ minor of the same matrix is
$O(g^{2g})$ since the determinant expansion has $g! = O(g^g)$ terms and each
term is $O(g^g)$.  So it suffices to estimate each of the $O(g^2)$ terms to
precision $O(n^{-\alpha-2}g^{-2g-2})$ in order for $\det \del_2^{(j,k)}$
(if it is then computed exactly) to have the desired accuracy.  Moreover,
each term is $O(g)$, which requires $O(\log(g))$ digits to the left of
each decimal point.  Thus the total  number of digits need to express each
entry is
\[ d = O(\log(g) + \log(n^{-\alpha-2}g^{-2g-2})), \]
which is polynomial in $\log(n)$, $\alpha$, and $g$.  We can then apply
\Thm{th:det} to exactly compute the determinant with these approximate
entries.
\end{proof}

To complete the proof of \Thm{th:main}, recall that $\zeta = \zeta_n^\ell$.
Recall from equation \eqref{e:tor2} that the Reidemeister torsion is
\[ f_-(\zeta) \defeq \Delta(R_*(\zeta)) = \zeta^c (1-\zeta^a)(1-\zeta^b). \]
We want to calculate several values of $f$ to obtained simplified sparse sums:
\begin{align*}
f_+(\zeta) &\defeq \frac{f_-(\zeta^2)}{f_-(\zeta)}
    = \zeta^c (1+\zeta^a)(1+\zeta^b) \\
g_+(\zeta) &\defeq \frac{f_+(\zeta)+f_-(\zeta)}2
    = \zeta^c + \zeta^{a+b+c} \\
g_-(\zeta) &\defeq \frac{f_+(\zeta)-f_-(\zeta)}2
    = \zeta^{a+c} + \zeta^{b+c} \\
h(\zeta) &\defeq \frac{g_+(\zeta)^2 - g_+(\zeta^2)}2 = \zeta^{a+b+2c}.
\end{align*}
At this point we assume that $n>4$, which we can do since otherwise
we can compute the Reidemeister torsion of $M$ directly over the field
$\Q(\zeta_n)$.  Using the three evaluations
\begin{eq}{e:eval} \Delta(R_*(\zeta_n)) \qquad \Delta(R_*(\zeta_n^2))
    \qquad \Delta(R_*(\zeta_n^4)), \end{eq}
we can learn the sum $g_+(\zeta_n)$ and the product $h(\zeta_n)$ of
$\zeta_n^c$ and $\zeta_n^{a+b+c}$; and the sum $g_-(\zeta_n)$ and the
product $h(\zeta_n)$ of $\zeta_n^{a+c}$ and $\zeta_n^{b+c}$.  We can thus
solve quadratic equations to obtain all four of these numbers.  We can
then learn the values of an unordered pair $\{\zeta_n^{sa},\zeta_n^{sb}\}$,
where $s = \pm 1$, by taking ratios.

If we calculate the torsion values \eqref{e:eval} using floating point
arithmetic over $\C$, we obtain floating point approximations to
\[ \zeta^{sa} = \exp(\frac{2\pi i sa}n) \qquad
    \zeta^{sb} = \exp(\frac{2\pi i sb}n). \]
We can then numerically calculate logarithms to obtain the arguments
$2\pi sa/n$ and $2\pi sb/n$.  If at this point we know $z$ and $\arg(z)$
to $O(\log(n))$ digits of precision, we can calculate the residues $sa,
sb \in \Z/n$ by rounding their computed values to the nearest integer.
We can then take their ratio in the ring $\Z/n$ to obtain $k^{\pm 1}$.
Working backwards, we it suffices to compute each values of $f_+$, $g_\pm$,
and $h$ with $O(\log(n))$ digits of precision.  We can use \Lem{l:precise}
to specify the precision at the beginning of the calculation in order to
have enough precision at this last stage.

% \bibliography{gt,qp,nt,me,web}

\providecommand{\bysame}{\leavevmode\hbox to3em{\hrulefill}\thinspace}
\providecommand{\MR}{\relax\ifhmode\unskip\space\fi MR }
% \MRhref is called by the amsart/book/proc definition of \MR.
\providecommand{\MRhref}[2]{%
  \href{http://www.ams.org/mathscinet-getitem?mr=#1}{#2}
}
\providecommand{\href}[2]{#2}
\providecommand{\eprint}{\begingroup \urlstyle{tt}\Url}

\end{document}